   \def\MR#1{}
\newcommand{\R}{\mathbb{R}}
\newcommand{\N}{\mathbb{N}}
\newcommand{\Z}{\mathbb{Z}}
\DeclareMathOperator{\area}{area}
\DeclareMathOperator{\vol}{vol}
\newtheorem{thm}{Theorem}
\newtheorem*{thm*}{Theorem}
\newtheorem*{namedthm}{\namedthmname}
\newtheorem*{lem*}{Lemma}
\newtheorem{prop}{Proposition}
\newtheorem{coro}{Corollary}
\newtheorem*{coro*}{Corollary}
\newtheorem*{problem}{Problem}
\theoremstyle{definition}
\newtheorem{defi}{Definition}
\newtheorem{rmk}{Remark}
\newcounter{namedthm}
\newenvironment{named}[1]
  {\def\namedthmname{#1}%
   \refstepcounter{namedthm}%
   \namedthm\def\@currentlabel{#1}}
  {\endnamedthm}
\title{When can minimal hypersurfaces be connected by mean curvature flow?}
\author{Jingwen Chen\textsuperscript{1}, Pedro Gaspar\textsuperscript{2}}
\address{\parbox{\linewidth}{
\textsuperscript{1} Department of Mathematics, University of Pennsylvania, \\
David Rittenhouse Lab,
209 South 33rd Street,
Philadelphia, PA 19104 \\ \textsuperscript{2} Facultad de Matem\'aticas, Pontificia Universidad Cat\'olica de Chile \\ Avenida Vicuña Mackenna 4860, Santiago, Chile \smallskip}}
\email{jingwch@sas.upenn.edu, pedro.gaspar@mat.uc.cl}
\begin{document}

\begin{abstract}

From the perspective of Morse theory, it is natural to investigate gradient flow trajectories between critical points. In this short note, we explore the minimal hypersurface analogue of this phenomenon and present examples that suggest additional topological and variational obstructions to the existence of connecting mean curvature flows.

\end{abstract}

\maketitle

\section{introduction}

The area functional is one of the most widely studied functions in geometric analysis. Minimal surfaces arise as critical points of this functional, while mean curvature flow can be interpreted as its negative gradient flow. The development of \emph{min-max theory} was motivated by Morse-theoretic considerations in the space of hypersurfaces \cite{MarquesNevesIndex}. From the Morse theory perspective, a natural question to ask is: Can two critical points be connected by a gradient flow trajectory? This suggests the following minimal hypersurface analogue: 

\begin{problem}
Let $(M,g)$ be a Riemannian manifold. Given minimal hypersurfaces $\Sigma_-$, $\Sigma_+$ in $(M,g)$, under what conditions can we construct a (weak) eternal solution $\{\Sigma_t\}_{t \in \R}$ of the mean curvature flow in $(M,g)$ such that $\Sigma_t$ converges to $\Sigma_{\pm}$ as $t \to \pm \infty$, in a suitable sense?
\end{problem}

From the variational description of the mean curvature flow mentioned above, if $\Sigma_\pm$ are closed, then a natural necessary condition is that the areas of $\Sigma_-$ and $\Sigma_+$ satisfy $\area(\Sigma_-) \geq \area(\Sigma_+)$, since we expect $\area(\Sigma_t)$ to be nonincreasing. Based on considerations from Morse Homology and min-max theory for minimal hypersurfaces, we conjecture that if $(M,g)$ is closed, equipped with a bumpy metric or a metric with positive Ricci curvature, then for any integer $p \geq 2$, there exists a min-max minimal hypersurface $\Sigma_-$ with Morse index $p$, whose area equals the $p$-width of $(M,g)$, and which can be connected to a minimal hypersurface $\Sigma_+$ of lower area and lower Morse index.

In the previous work \cite{ChenGaspar}, the authors studied eternal Brakke flows — a geometric measure-theoretic weak mean curvature flow — connecting low area minimal surfaces of the round $3$-sphere, based on the resolution of Willmore conjecture by Marques-Neves \cite{MarquesNevesWillmore}. Later in \cite{chensymmetric, cgmorse}, we extend the results to higher-dimensional cases and in the Allen-Cahn setting, with further information about the space of connecting flows between low area symmetric minimal hypersurfaces in $S^3$ and $S^n$.\medskip

In this short note, building on the works of \cite{Whitetopology} and \cite{mramor2024long}, we provide examples that suggest other topological and variational obstructions to the existence of such connecting mean curvature flows. We consider the one-parameter family of \emph{Berger $3$-spheres} $S_{\tau}$, which are important examples of compact homogeneous $3$-manifolds; see Section \ref{berger} for a precise description of these metrics and further geometric properties. For any integer $p \geq 1$, by combining the explicit knowledge of the area of minimal spheres in $S_\tau$ \cite{Torralbo}, the upper bounds for the widths obtained by the authors recently in \cite{cgberger}, and the continuity of the widths in the space of Riemannian metrics \cite{MarquesNevesequidistribution}, we prove the existence of a Berger sphere $S_{\tau(p)}$ for which the area of the minimal spheres attains the $p$-width.

\begin{prop} \label{equator p}
For any integer $p \geq 1$, there exist $\tau(p) \in (0,1]$, such that
\begin{equation*}
\area_{S_{\tau(p)}} (S^2) = \omega_p(S_{\tau(p)}).    
\end{equation*}
\end{prop}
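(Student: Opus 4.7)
The plan is to apply the intermediate value theorem to the continuous function $F_p : (0,1] \to \R$ given by
\[
F_p(\tau) := \omega_p(S_\tau) - \area_{S_\tau}(S^2).
\]
Its continuity rests on two inputs: continuity of the $p$-width in the Riemannian metric, due to Marques--Neves \cite{MarquesNevesequidistribution}; and the explicit (hence continuous) dependence of the minimal $2$-sphere area on $\tau$ in Berger spheres, from Torralbo \cite{Torralbo}.

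I would first evaluate the sign of $F_p$ at the round endpoint $\tau = 1$. There $S_1$ is the round $3$-sphere, $\area_{S_1}(S^2) = 4\pi$, and $\omega_1(S_1) = 4\pi$ (the first width is realized by a great $2$-sphere). Since the $p$-widths are nondecreasing in $p$, this gives $F_p(1) \geq 0$ for all $p$, with equality when $p = 1$; so one may take $\tau(1) = 1$ immediately, and from here on assume $p \geq 2$.

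For the other sign, I would exhibit some $\tau_* \in (0,1)$ with $F_p(\tau_*) \leq 0$ by comparing the upper bound on $\omega_p(S_\tau)$ obtained by the authors in \cite{cgberger} against the formula for $\area_{S_\tau}(S^2)$ in \cite{Torralbo}. Morally, as $\tau \to 0^+$ the Berger sphere collapses along the Hopf fibers (its total volume is proportional to $\tau$), so the $p$-width upper bound decays with $\tau$, while the area of the minimal $S^2$ decays at a slower, controlled rate; this should produce a sign change of $F_p$ in $(0,1]$. Once a $\tau_*$ with $F_p(\tau_*) \leq 0$ is located, the intermediate value theorem applied to $F_p$ on $[\tau_*, 1]$ yields $\tau(p) \in (0,1]$ with $F_p(\tau(p)) = 0$, as required.

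The main obstacle is quantitative: one must make the comparison between \cite{cgberger}'s upper bound and Torralbo's explicit area formula precise enough to guarantee a sign change for \emph{every} $p \geq 2$. In particular, the $p$-dependence of the upper bound on $\omega_p(S_\tau)$ must be tracked against the $\tau$-asymptotics of $\area_{S_\tau}(S^2)$ in the collapsing regime $\tau \to 0^+$, which is where the real work of the proof lies.
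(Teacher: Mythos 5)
Your framework is exactly the one the paper uses: apply the intermediate value theorem to $F_p(\tau) = \omega_p(S_\tau) - \area_{S_\tau}(S^2)$, with continuity supplied by \cite{MarquesNevesequidistribution} and \cite{Torralbo}, nonnegativity at $\tau = 1$, and nonpositivity somewhere near $\tau = 0$. However, you explicitly leave the crucial step blank — producing $\tau_*$ with $F_p(\tau_*) \leq 0$ — and flag it as ``where the real work of the proof lies.'' That is a genuine gap: without a concrete $\tau_*$, the IVT has nothing to bite on, and it is precisely this step that the proposition is really about.

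The good news is that the comparison is much cleaner than your ``$\tau$-asymptotics of $\area_{S_\tau}(S^2)$ in the collapsing regime'' framing suggests, and you do not need to track any decay rate of the area at all. Torralbo's formula gives the \emph{uniform} lower bound $A(\tau) = \area_{S_\tau}(S^2) \geq 2\pi$ for every $\tau \in (0,1]$: the minimal sphere does \emph{not} collapse as $\tau \to 0^+$ even though the total volume does. Meanwhile the width bound from \cite{cgberger} is the explicit $\omega_p(S_\tau) \leq 2\pi^2 \tau \lfloor\sqrt{p}\rfloor$. So as soon as $\tau \leq \frac{1}{\pi\lfloor\sqrt{p}\rfloor}$ you get $\omega_p(S_\tau) \leq 2\pi \leq A(\tau)$, i.e.\ $F_p(\tau) \leq 0$, with no asymptotic analysis. (One small additional remark: for $p \leq 4$ there is nothing to do, since $\omega_p(S_1) = 4\pi = A(1)$ already by the classical computation of the low widths of the round $S^3$, so $\tau(p) = 1$ works directly; you only need the sign change for $p \geq 5$, and for those the paper uses the Willmore-conjecture value $\omega_5(S_1) = 2\pi^2 > 4\pi$ to get strict positivity of $F_p(1)$, though your monotonicity argument giving $F_p(1) \geq 0$ also suffices.)
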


Together with the results of \cite{Whitetopology,mramor2024long}, this implies our main theorem:

\begin{named}{Theorem A} \label{nonexistence}
There exist infinitely many Riemannian spheres $(S^3,g)$ with metrics of positive Ricci curvature, each admitting a minimal surface $\Sigma^p$ whose area equals the $p$-width of $(S^3,g)$ for some integer $p \geq 1$ and satisfying the following property: for every minimal surface $\Sigma^q$ with smaller area, there is no eternal Brakke flow $\{\Sigma_t\}_{t \in (-\infty, \infty)}$ so that
\[\Sigma_t \to \Sigma^q \text{ as }t \to +\infty, \text{ and }\Sigma_t \to \Sigma^{p} \text{ as }t \to -\infty,\quad  \text{in the sense of varifold convergence}. \]
\end{named}\medskip

\begin{rmk}
This theorem does not provide a counterexample to the conjectured result stated below the Problem above concerning the existence of a connection from a min-max minimal hypersurface, since there may exist other minimal surfaces realizing the $p$-width that could be connected to a minimal surface with low index via an eternal Brakke flow. In fact, for generic (bumpy) metrics $g$ on $M$, it follows from \cite{MarquesNevesIndex,Zhou} that $\omega_p(M,g)$ is achieved by the area of an embedded minimal surface with Morse index $p$, whereas $S^2 \subset S_\tau$ has Morse index $1$ by \cite{TUIndex}. 

Using an approximation argument, together with the results of \cite{MarquesNevesIndex,Zhou} and compactness theorems for minimal surfaces with bounded index \cite{SharpCompactness}, we can find minimal surfaces $\Sigma^p$ in $S_\tau$ with area $\omega_p(S_\tau)$, Morse index $\leq p$, and whose index plus nullity (as a critical point of the area functional) is bounded below by $p$. It would be interesting to know whether these minimal surfaces can be connected via mean curvature flow to another minimal surface of lower index.
\end{rmk}

In addition, we provide a strengthened version of Proposition \ref{equator p}, with further information about the coincidence of the first $p$ widths of Berger spheres:

\begin{prop} \label{equator p not 1}
There exist $\tau \in (0,1)$ and $p > 1$, such that the min-max widths of the Berger $3$-sphere $S_\tau$ satisfy:
\begin{equation*}
\area_{S_{\tau}} (S^2) = \omega_p(S_{\tau}) > \omega_1(S_{\tau}).    
\end{equation*}
\end{prop}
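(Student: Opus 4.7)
The plan is to extend the argument underlying Proposition \ref{equator p} in order to produce a pair $(\tau, p)$ with $\tau \in (0,1)$ and $p \geq 2$ satisfying the extra constraint $\omega_1(S_\tau) < a(\tau)$, where $a(\tau) := \area_{S_\tau}(S^2)$. Throughout, I would use that $\tau \mapsto \omega_p(S_\tau)$ is continuous for each $p$ by \cite{MarquesNevesequidistribution}, and that $\tau \mapsto a(\tau)$ is continuous on $(0,1]$ through Torralbo's explicit formula \cite{Torralbo}.

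First, I would isolate a subinterval $(0, \tau_0) \subset (0, 1)$ on which $\omega_1(S_\tau) < a(\tau)$. This uses the explicit upper bound for $\omega_1(S_\tau)$ coming from the Hopf-invariant sweepout of $S_\tau$ by Clifford tori $\{|z_1| = \sin\theta\}$. A direct computation of the area form in the Berger metric shows that such a torus has area $2\pi^2 \tau |\sin(2\theta)|$, so the sweepout gives $\omega_1(S_\tau) \leq 2\pi^2 \tau$. Combining this linear-in-$\tau$ bound with the formula for $a(\tau)$ from \cite{Torralbo} should isolate a threshold $\tau_0 \in (0,1)$ such that $\omega_1(S_\tau) \leq 2\pi^2 \tau < a(\tau)$ throughout $(0, \tau_0)$.

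Next, I would run the intermediate value argument behind Proposition \ref{equator p} inside $(0, \tau_0]$. On one hand, the Weyl-type asymptotics for the $p$-widths imply that $\omega_p(S_{\tau_0}) \to \infty$ as $p \to \infty$, so some $p \geq 2$ satisfies $\omega_p(S_{\tau_0}) > a(\tau_0)$. On the other hand, the upper bounds from \cite{cgberger} for the $p$-widths of Berger spheres, together with the collapse $\vol(S_\tau) = 2\pi^2 \tau \to 0$ as $\tau \to 0^+$, give $\omega_p(S_\tau) < a(\tau)$ for $\tau$ sufficiently small. Continuity and the intermediate value theorem would then yield $\tau^* \in (0, \tau_0)$ with $\omega_p(S_{\tau^*}) = a(\tau^*)$, and at this $\tau^*$ the chain
\[ \omega_1(S_{\tau^*}) \leq 2\pi^2 \tau^* < a(\tau^*) = \omega_p(S_{\tau^*}) \]
delivers the conclusion.

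The main obstacle I anticipate is the quantitative content of the first step: verifying that the interval on which the Clifford-torus sweepout beats $a(\tau)$ is genuinely nonempty. This reduces to showing, via Torralbo's formula, that the minimal two-sphere does not degenerate faster than linearly in $\tau$ as the Hopf fibers collapse; equivalently, one must check that $a(\tau)/\tau$ exceeds $2\pi^2$ on some subinterval of $(0,1)$. Once this calculation is in place, the remaining steps are routine applications of continuity, the cited width bounds, and the intermediate value theorem.
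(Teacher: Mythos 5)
Your proposal is correct and follows essentially the same route as the paper's second proof of this proposition: isolate a regime of small $\tau$ where $\omega_1(S_\tau) < A(\tau)$, apply the Weyl law there to find $p \geq 2$ with $\omega_p > A$, and then rerun the intermediate value argument from Proposition~\ref{equator p} to land at some $\tau^*$ in that regime with $\omega_p(S_{\tau^*}) = A(\tau^*)$. The only (cosmetic) difference is that you obtain the threshold $\tau_0 = 1/\pi$ by directly computing the area $2\pi^2\tau|\sin(2\theta)|$ of the Clifford-torus sweepout and comparing with $A(\tau) \geq 2\pi$, whereas the paper sets $\tau'$ equal to the infimum of the equality set $\{\tau : \omega_1(S_\tau) = A(\tau)\}$ and invokes Corollary~D of \cite{cgberger} for the lower bound $\tau' \geq 1/\pi$; both inputs are equivalent, and your version is a self-contained re-derivation of that corollary.
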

By iterating the argument in the proof of this proposition, we obtain the following result:

\begin{coro} \label{infinite berger}
There exists an increasing sequence $\{q_i\}$ of positive integers and a corresponding decreasing sequence $\{\tau_{q_i}\}$ with $\tau_{q_i} \in (0,1]$ such that for each $i$, the min-max widths of the Berger $3$-spheres $S_{\tau_{q_i}}$ satisfy:
\begin{equation*}
\area_{S_{\tau_{q_i}}}(S^2) = \omega_{q_i}(S_{\tau_{q_i}}) > \omega_{q_i - 1}(S_{\tau_{q_i}}).
\end{equation*}
\end{coro}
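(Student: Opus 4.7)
The plan is to iterate the argument behind Proposition \ref{equator p not 1} and track how the index at which $\omega_k(S_\tau)$ first catches up with $\area_{S_\tau}(S^2)$ must grow as $\tau$ decreases. Write $A(\tau) := \area_{S_\tau}(S^2)$ and $W_k(\tau) := \omega_k(S_\tau)$; by Torralbo's explicit formula \cite{Torralbo} and the continuity of widths in the metric \cite{MarquesNevesequidistribution}, both are continuous functions of $\tau \in (0,1]$. Define the matching index $p(\tau) := \min\{k \geq 1 : W_k(\tau) \geq A(\tau)\}$, which is finite for each $\tau$ by the Weyl-type growth $W_k(\tau) \to \infty$ as $k \to \infty$. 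Proposition \ref{equator p not 1} already exhibits some $\tau_0 \in (0,1)$ with $p(\tau_0) = q_1 > 1$ and $W_{q_1}(\tau_0) = A(\tau_0) > W_{q_1 - 1}(\tau_0)$, giving the first term of the desired sequence.

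The heart of the iteration is the divergence $p(\tau) \to \infty$ as $\tau \to 0^+$. This will follow from the sweepout upper bounds in \cite{cgberger}, which control $W_k(\tau)$ by quantities that degenerate under the collapse of the Hopf fibers (so that $W_k(\tau)/A(\tau) \to 0$ for each fixed $k$), combined with a positive lower bound on $A(\tau)$ extracted from Torralbo's formula. Once this divergence is established, the sequences are extracted by a standard intermediate-value argument of the same flavor as Proposition \ref{equator p not 1}: having found $(q_i, \tau_{q_i})$, choose $q_{i+1}$ to be the smallest integer strictly greater than $q_i$ for which the continuous function $W_{q_{i+1}} - A$ has a zero on $(0, \tau_{q_i})$ at which $W_{q_{i+1} - 1} - A$ is still negative, and take $\tau_{q_{i+1}} < \tau_{q_i}$ to be that crossing point. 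The existence of such an index is forced by $p(\tau) \to \infty$, since each time $p$ jumps upward across some value, continuity of the $W_k$'s produces exactly such a crossing. Induction then yields the required strictly increasing $\{q_i\}$ and strictly decreasing $\{\tau_{q_i}\}$.

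The main obstacle is the quantitative collapse estimate $W_k(\tau)/A(\tau) \to 0$ as $\tau \to 0$ for each fixed $k$. It is geometrically very natural, since $\vol(S_\tau) \to 0$ linearly in $\tau$ while the minimal sphere $S^2 \subset S_\tau$ retains positive area, but it must be extracted rigorously from the explicit sweepout constructions of \cite{cgberger}. The remaining ingredients — the continuity of $W_k$ and $A$, the definition of $p(\tau)$, and the inductive extraction of crossings — are then routine, so the proof of the corollary reduces essentially to packaging these inputs together with Proposition \ref{equator p not 1}.
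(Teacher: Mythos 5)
Your proposal is essentially the same as the paper's: the monotone ``matching index'' $p(\tau)$ you introduce plays the role of the paper's $q_i := \min\{k : \omega_k(S_{\tau_{p_i}}) = A(\tau_{p_i})\}$, and the crossing points you extract by the intermediate value theorem are exactly the $\tau_{p_{i+1}}$ the paper produces. Two remarks. First, the ``main obstacle'' you identify --- proving $W_k(\tau)/A(\tau)\to 0$ as $\tau\to 0^+$ for fixed $k$ --- is not an obstacle at all: it is immediate from the already-stated bound \eqref{berger width upper bound}, $\omega_k(S_\tau)\leq 2\pi^2\tau\lfloor\sqrt{k}\rfloor$, together with $A(\tau)\geq 2\pi$; there is nothing left to ``extract rigorously'' from \cite{cgberger}. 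Second, the inductive step as you state it --- ``choose $q_{i+1}$ to be the smallest integer $>q_i$ for which $W_{q_{i+1}}-A$ has a zero on $(0,\tau_{q_i})$ at which $W_{q_{i+1}-1}-A$ is still negative'' --- presumes the existence of such an index rather than establishing it. The paper supplies this: take $\tau' := \inf\{\tau\in(0,\tau_{q_i}] : \omega_{q_i}(S_\tau)=A(\tau)\}$ (which is $>0$ by \eqref{berger width upper bound} and, by continuity, still satisfies $\omega_{q_i}(S_{\tau'})=A(\tau')$), use unboundedness of the volume spectrum to find $m>q_i$ with $\omega_m(S_{\tau'})>A(\tau')$, then IVT to find $\tau^*\in(0,\tau')$ with $\omega_m(S_{\tau^*})=A(\tau^*)$; since $\tau^*<\tau'$ forces $\omega_{q_i}(S_{\tau^*})<A(\tau^*)$, the matching index $p(\tau^*)\in(q_i,m]$ works. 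Your hand-wave about ``$p$ jumping upward'' does not automatically yield a crossing because lower semicontinuity of $p$ alone does not locate a zero of the right $W_k-A$; you need the infimum/IVT construction to pin it down. With that filled in, the argument is correct and coincides with the paper's.
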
\bigskip

\subsection*{Acknowledgements}

We would like to thank André Neves and Ao Sun for many helpful discussions. PG was supported by ANID (Agencia Nacional de Investigaci\'on y Desarrollo) FONDECYT Iniciaci\'on grant.

\section{preliminary}

In this section, we present some background results needed for this paper.

\subsection{Min-max theory}

We briefly recall some basic definitions and important results about the volume spectrum of a closed Riemannian manifold $(M^n,g)$. We refer to \cite{MarquesNevesWillmore,MarquesNevesIndex,LMNWeyl} and the references therein for further details.

\begin{defi}
The (min-max) \emph{$p$-width} $\omega_p(M,g)$ is
\[\omega_p(M,g):= \inf\limits_{\Phi \in \mathcal{P}_p(M)} \sup\limits_{x \in X} \mathbf{M}(\Phi(x)),\]
where $\mathcal{P}_p(M)$ denotes the space of $p$-sweepouts of $M$ by $(n-1)$-dimensional mod $2$ cycles, and $\mathbf{M}$ stands for the \emph{mass} of such cycles.
\end{defi}

The non-decreasing sequence $\{\omega_p(M,g)\}$ is called the \emph{volume spectrum} of $(M,g)$. The asymptotic growth of this sequence was studied by L. Guth in \cite{guthminmax}. In addition, Liokumovich-Marques-Neves proved in \cite{LMNWeyl} that it satisfies the following Weyl-type law:

\begin{thm}[\cite{LMNWeyl}] \label{weyl law}
There exists a constant $a(n)>0$ such that for every compact Riemannian manifold $(M^{n},g)$ with (possibly empty) boundary, we have
\[\lim\limits_{p \to \infty} \omega_p(M) p^{-\frac{1}{n}} = a(n) \vol (M)^{\frac{n-1}{n}}.\]
\end{thm}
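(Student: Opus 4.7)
The plan is to prove the two-sided asymptotic by handling the upper and lower bounds separately, with the main work concentrated on the existence and universality of the limit.

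\textbf{Upper bound.} First I would show $\limsup_{p \to \infty} \omega_p(M,g)\, p^{-1/n} \leq C(n)\,\vol(M)^{(n-1)/n}$ for some dimensional constant $C(n)$. The strategy is to realize $\omega_p$ through sweepouts built from volume-balanced partitions: for each $p$, use a Gromov--Guth-type construction to partition $M$ into $p$ Borel regions $U_1,\dots,U_p$ with $\vol(U_i) \approx \vol(M)/p$ and $\area(\partial U_i) \leq C (\vol(M)/p)^{(n-1)/n}$. A $p$-sweepout can then be parametrized by $t=(t_0,\dots,t_p)\in \mathbb{RP}^p$, assigning to each $t$ the boundary (mod $2$) of $\bigcup_{t_i>0} U_i$, with mass bounded by $\sum_i \area(\partial U_i) = O(\vol(M)^{(n-1)/n}\, p^{1/n})$. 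The technical input here is an Almgren--Pitts-style interpolation, so that the resulting discrete family admits a continuous $p$-sweepout refinement with controlled mass.

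\textbf{Lower bound and existence of a universal constant.} This is the hard part. I would set
\[
\underline{W}(M,g) := \liminf_{p\to \infty} \omega_p(M,g)\, p^{-1/n}, \qquad \overline{W}(M,g) := \limsup_{p\to \infty} \omega_p(M,g)\, p^{-1/n},
\]
and prove that both equal $a(n)\,\vol(M)^{(n-1)/n}$ for a universal $a(n)>0$. Positivity of $\underline{W}(M,g)$ follows from Gromov's lower bound relating widths to volume and the $n$-waist inequality. The key step is a near-additivity under disjoint unions: if $M \approx U \sqcup V$ up to a set of small boundary area, one shows that for any partition $p = p_1 + p_2$ the widths satisfy a two-sided inequality
\[
\omega_{p_1}(U) + \omega_{p_2}(V) - \varepsilon \leq \omega_{p_1+p_2}(U \sqcup V) \leq \omega_{p_1}(U) + \omega_{p_2}(V) + \varepsilon,
\]
after passing to sweepouts and using an Almgren--Pitts cut-and-paste together with a suitable calibration of the parameter space. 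Optimizing over $(p_1,p_2)$ and iterating yields that $\underline{W}$ and $\overline{W}$ are additive on disjoint unions, whence $W(M,g)/\vol(M)^{(n-1)/n}$ is constant over suitable families. Tiling $(M,g)$ by small geodesic balls (nearly Euclidean by Toponogov comparison) and invoking continuity of widths under smooth metric deformations, one reduces the computation of the constant to a single model — e.g.\ a flat round domain — yielding both equality $\underline{W}=\overline{W}$ and independence from $(M,g)$.

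\textbf{Main obstacle.} The delicate step is the near-additivity estimate, since $p$-widths are min-max quantities with no clean Mayer--Vietoris structure (unlike Laplace eigenvalues, which satisfy Dirichlet--Neumann bracketing). The proof must show that an optimal sweepout of $M$ can be \emph{cut} along the common boundary $\partial U \cap \partial V$ and reassembled into sweepouts of $U$ and $V$ of the correct homological degrees without losing mass, and conversely that sweepouts on the pieces glue to a sweepout of $M$. I expect this to require a discrete-to-continuous interpolation in the Almgren--Pitts framework, combined with the control over the boundary area of the tiles given by the isoperimetric partition construction above. Once this is in place, the Weyl-type law follows by the standard argument: refine the tilings, use continuity and the universal constant on a model domain, and balance the scales between $p$ and the number of tiles.
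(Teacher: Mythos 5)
This statement is quoted from \cite{LMNWeyl} and is not proved in the paper at all; it is an external citation, so there is no ``paper's own proof'' to compare against. That said, your sketch can be measured against the actual proof of Liokumovich--Marques--Neves.

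Your upper bound via Gromov--Guth-type volume-balanced partitions and an Almgren--Pitts discrete-to-continuous interpolation is the right ingredient and matches the LMN strategy. The lower-bound/universality part, however, deviates from their argument in ways that matter. First, LMN do \emph{not} prove a two-sided ``near-additivity'' $\omega_{p_1}(U)+\omega_{p_2}(V)-\varepsilon \leq \omega_{p_1+p_2}(U\sqcup V)\leq \omega_{p_1}(U)+\omega_{p_2}(V)+\varepsilon$ by cut-and-paste of sweepouts. Instead they prove a one-sided Lusternik--Schnirelmann inequality (super-additivity of widths over disjoint subdomains), and it is obtained by a topological argument about cup-products of the sweepout cohomology classes, not by cutting varifolds along a hypersurface and reassembling; the sub-additive direction never appears, because the complementary bound already comes from the explicit Gromov--Guth sweepout construction. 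Your identification of ``cut a sweepout along $\partial U\cap \partial V$ and reassemble'' as the key technical step is therefore aimed at the wrong mechanism, and I doubt it can be made to work directly -- min-max families are parametrized by cohomology classes in cycle space, and cutting a single cycle along a fixed hypersurface does not respect the $p$-sweepout condition in any obvious way. Second, the existence of the limit and the universality of $a(n)$ are first established for a Euclidean cube by a self-similar tiling argument (a cube contains $k^n$ disjoint rescaled copies of itself; the L--S super-additivity plus the scaling $\omega_p(\lambda\Omega)=\lambda^{n-1}\omega_p(\Omega)$ plus the Gromov--Guth upper bound squeeze the limit), and only then is a general $(M,g)$ compared to the cube via a decomposition into regions that are $(1+\epsilon)$-bi-Lipschitz to Euclidean domains, using bi-Lipschitz stability of widths. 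This is a sharper and more structured step than ``tiling by small geodesic balls and invoking continuity of widths under smooth metric deformations,'' which on its own does not transfer the constant $a(n)$ from a model to $(M,g)$. In short: the architecture (upper bound by partitions, lower bound by disjoint-union monotonicity, reduction to a model) is right, but the monotonicity should be the one-sided topological Lusternik--Schnirelmann inequality rather than an analytic near-additivity, and the reduction should pass through a self-similar cube calculation and bi-Lipschitz comparison rather than metric continuity alone.
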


Moreover, we have the following main existence result in the min-max theory.

\begin{thm*}[{\cite{MarquesNevesIndex}, \cite[Theorem C]{Zhou}}] \label{widths are achieved} For every $p \in \N$, there exists a stationary integral varifold $V$ with $\mathrm{spt}\|V\| = \Sigma$ such that
    \begin{enumerate}
        \item[(i)] $\omega_p(M,g) = \|V\|(M)$,
        \item[(ii)] $\Sigma$ is a minimal hypersurface with \emph{optimal regularity}, namely it is smooth away from a singular set of Hausdorff dimension at most $(n-8)$,
        \item[(iii)] The Morse index of $\Sigma$ satisfies $\mathrm{ind}(\Sigma) \leq p$.
    \end{enumerate}
\end{thm*}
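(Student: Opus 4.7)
The plan is to deploy the Almgren--Pitts min-max framework in the refined form developed by Marques--Neves and then by Zhou. The proof splits into three essentially independent components: producing a stationary integral varifold of the correct mass, establishing its optimal regularity, and bounding the Morse index of its support.

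For $(i)$, I would take a minimizing sequence $\{\Phi_i\} \subset \mathcal{P}_p(M)$ so that $\sup_{x} \mathbf{M}(\Phi_i(x)) \to \omega_p(M,g)$, pass to a discretization of each $\Phi_i$ in the flat topology, and apply the Almgren--Pitts pull-tight procedure. The ``critical slices''---those parameters at which the mass is near the supremum---then converge, along a subsequence, in the varifold sense, to stationary integral varifolds of mass exactly $\omega_p(M,g)$. A combinatorial argument of Pitts selects among these limits one $V$ that is moreover \emph{almost-minimizing} on sufficiently small annuli centered at every point of $\mathrm{spt}\|V\|$.

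For $(ii)$, the almost-minimizing property together with integrality is precisely the input required for the regularity theory: Pitts' interior regularity in low dimensions, combined with the Schoen--Simon curvature estimates and Wickramasekera's theorem on stable codimension-one integral varifolds, shows that $\mathrm{spt}\|V\|$ is a smooth embedded minimal hypersurface away from a closed singular set of Hausdorff dimension at most $n-8$.

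The deepest step is $(iii)$, which is the main contribution of \cite{MarquesNevesIndex}. I would argue by contradiction: if $\mathrm{ind}(\Sigma) \geq p+1$, the first $p+1$ negative eigenfunctions of the Jacobi operator on $\Sigma$ span a $(p+1)$-parameter family of graphical normal deformations along which area strictly decreases to second order. Via the interpolation theorems between continuous and Almgren--Pitts discrete sweepouts, these deformations can be grafted onto a nearly minimizing $\Phi_i$ near its maximal slices to build a competitor $p$-sweepout of strictly smaller width, contradicting the definition of $\omega_p$. The main obstacle is precisely the interplay with integer multiplicity: a limit of multiplicity $m \geq 2$ can absorb the negative-eigenfunction deformations into multiplicity changes rather than genuine index drops, which is what Zhou's multiplicity-one theorem rules out (in the bumpy case) and what allows the integer index bound to genuinely reflect the sweepout dimension $p$. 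Making the interpolation machinery compatible with Pitts' discretization scheme---and not merely with smooth one-parameter families---is the technical heart of the proof.
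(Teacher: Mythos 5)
The paper does not prove this theorem: it is stated without proof in Section~2 as a background result drawn from \cite{MarquesNevesIndex} and \cite{Zhou}, so there is no in-paper argument to compare your sketch against. Your proposal is nonetheless a broadly faithful high-level summary of what those papers do, but two points deserve sharpening.

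For item~(iii), the Marques--Neves mechanism is not ``graft the negative eigenfunctions of the Jacobi operator onto a nearly minimizing sweepout near its maximal slices.'' Their argument is a Morse-theoretic deformation theorem for the Almgren--Pitts setting: assuming for contradiction that every stationary integral varifold realizing $\omega_p$ has support of index $\geq p+1$, one covers a neighborhood of this critical set in the varifold metric by finitely many open sets on each of which there is a $(p+1)$-parameter family of area-decreasing isotopies, and then uses a partition of unity together with a degree/Borsuk--Ulam type obstruction --- exploiting that the parameter domain is only $p$-dimensional while the decreasing cone is $(p+1)$-dimensional --- to deform the sweepout globally and strictly lower its width. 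Merely exhibiting a $(p+1)$-dimensional negative cone at a single maximal slice is not enough; making the local decreasing deformations at different parameters mutually consistent through this covering argument is where the real technical content lies, rather than in the interpolation theorems (which are needed, but are more of a compatibility layer). You also conflate the roles of the two references: the upper bound $\mathrm{ind}(\Sigma)\leq p$ in (iii) is due to \cite{MarquesNevesIndex} and holds with multiplicity, without Zhou's theorem. The relevance of \cite[Theorem C]{Zhou} is the multiplicity-one and two-sidedness statement for bumpy metrics, which the present paper invokes only in the Remark following this theorem to upgrade $\leq p$ to $= p$ generically --- not to establish (i)--(iii) as stated.
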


\subsection{Mean curvature flow} \label{mcf}

A smooth one-parameter family of embedded hypersurfaces $\Sigma(t) \subset (M,g),\ t \in (0, T)$ evolves by its mean curvature if 
\[
\partial_t x=\vec H(x),
\]
where $\vec H(x)$ is the mean curvature vector of the hypersurface at point $x$.

Singularities are generally unavoidable in mean curvature flow, which forces us to study weak solutions of mean curvature flow. There are (at least) three standard methods to continue the flow past singularities: Brakke flow  \cite{Brakke}, mean curvature flows with surgery \cite{HuiskenSinestrari}, and level set flow \cite{EvansSpruck}. These generalized solutions are called \emph{weak} mean curvature flows.

In \cite{Whitetopology}, White analyzed the topological behavior of the complement of the level set flow. Roughly speaking, White proved that the topology under evolution could only become simpler. Combining this work with the Brakke regularity theorem, the results from Brendle \cite{Brendleshrinker}, Choi–Haslhofer–
Hershkovits–White \cite{chhwancient}, and Bamler-Kleiner \cite{bamler2023multiplicity}, we obtain the following result:

\begin{prop} \label{genus non increasing}

Let $(M^3, g)$ be a $3$-dimensional closed Riemannian manifold. Let $\{ \Sigma_t \}_{t \in (-\infty, \infty)}$ be an eternal Brakke flow on $M$. Suppose that $\Sigma_t$ converges (in the sense of varifold convergence) to smooth minimal surfaces $\Sigma_{\pm \infty}$ as $t \to \pm \infty$ respectively, where $\Sigma_{+ \infty}$ may have multiplicity. If $\Sigma_{-\infty}$ is a sphere of multiplicity $1$, then $\Sigma_{+\infty}$ has genus $0$.

\end{prop}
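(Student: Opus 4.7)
\smallskip

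\noindent\emph{Proof plan.} My plan is to show that $\mathrm{genus}(\Sigma_t) = 0$ for every $t \in \R$, and then deduce the genus bound on $\Sigma_{+\infty}$ by passing to the varifold limit.

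First, I would handle the very negative regime. Since $\Sigma_{-\infty}$ is a smooth embedded minimal $2$-sphere and the varifold convergence $\Sigma_t \to \Sigma_{-\infty}$ as $t \to -\infty$ has multiplicity one, Brakke's local regularity theorem yields that $\Sigma_t$ is smooth and close to $\Sigma_{-\infty}$ in a spacetime neighborhood of $\{-\infty\} \times \Sigma_{-\infty}$. In particular, $\Sigma_t$ is diffeomorphic to $S^2$, and thus has genus zero, for all sufficiently negative $t$.

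Next, I would propagate this across singular times using White's topological monotonicity \cite{Whitetopology} for the level set flow (which agrees with the Brakke flow here thanks to multiplicity one). White's theorem asserts that the topology of the complement can only simplify — and in particular, the genus of the evolving surface cannot increase — provided that at every singular spacetime point the tangent flow is of a controlled type. In our setting these hypotheses follow from the combination of: Brendle's classification \cite{Brendleshrinker} of embedded genus-zero self-shrinkers in $\R^3$ as planes, spheres, and cylinders; the Choi–Haslhofer–Hershkovits–White classification \cite{chhwancient} of the associated ancient low-entropy flows; and the Bamler–Kleiner multiplicity-one theorem \cite{bamler2023multiplicity}, which rules out multiplicity jumps at tangent flows. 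Together these force $\mathrm{genus}(\Sigma_t) = 0$ for every $t \in \R$.

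Finally, I would pass to the limit $t \to +\infty$. Using varifold convergence of $\Sigma_t$ to the (possibly nonreduced) $\Sigma_{+\infty}$, together with the multiplicity-one conclusion on the smooth stratum and lower semicontinuity of the genus of the support under smooth convergence, one obtains $\mathrm{genus}(\Sigma_{+\infty}) \leq \liminf_{t \to +\infty} \mathrm{genus}(\Sigma_t) = 0$. I expect the main obstacle to be the second step: tracking the topology across arbitrary singularities of a general Brakke flow is delicate, and it is exactly here that the combined inputs of Brendle, CHHW, and Bamler–Kleiner are indispensable in order to reduce to White's topological monotonicity framework.
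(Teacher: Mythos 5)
Your proposal deploys the same ingredients, in the same order, as the argument the paper cites from Mramor (Prop.\ 3.7): Brakke regularity to recover a smooth genus-zero sphere for $t \ll 0$, White's topological monotonicity to propagate the genus bound across singular times, and a limiting argument at $t \to +\infty$. One misattribution worth correcting: White's topological monotonicity theorem for the level set flow is unconditional and carries no proviso that tangent flows be ``of a controlled type''; the actual role of Brendle, Choi--Haslhofer--Hershkovits--White, and Bamler--Kleiner is to rule out fattening so that the Brakke flow you are following coincides with the unique level set flow to which White's theorem applies, and the paper emphasizes that nonfattening here follows from the flow being of genus $0$ (so the innermost and outermost flows coincide), which is the right thing to invoke rather than multiplicity one alone.
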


See, e.g., \cite[Proposition 3.7]{mramor2024long} for a proof. The flow we considered is of genus $0$ and thus will not fatten; the innermost and outermost flows coincide. For more discussion about the long-time limit of mean curvature flow in closed manifolds, see \cite{Ilmanen95_Sing2D, mramor2024long}.

\subsection{Berger sphere} \label{berger}

Recall the one-parameter family of \emph{Berger metrics} on $S^3 = \{z=(z_1,z_2) \in \mathbb{C}^2 \colon |z_1|^2+|z_2|^2=1\}$:
\[
\left\langle v,w \right\rangle_{\tau} = \left\langle u,v \right\rangle - (1-\tau^2) \left\langle v,iz\right\rangle \left\langle w, iz\right\rangle, \qquad v,w \in T_zS^2,
\]
where $\left\langle \cdot, \cdot \right\rangle$ is the standard Euclidean metric in $\mathbb{C}^2$. For every $\tau \in \R_{>0}$, this defines a homogeneous metric on $S^3$ with $4$-dimensional isometry group, with $\left\langle\cdot,\cdot\right\rangle_1=\left\langle\cdot,\cdot\right\rangle=$ round metric on $S^3$. The Berger metrics have positive Ricci curvature (see e.g. \cite{TUIndex}). Let $S_{\tau}$ denote $S^3$ with the Berger metric $\left\langle\cdot,\cdot\right\rangle_\tau$, thus $S_1$ is the round $3$-sphere. The standard equatorial sphere $S^2=\{(z_1,z_2) \in S^3 \colon\ \mathrm{Im}z_2 =0\}$ -- and hence any other equatorial sphere, as $S^3_\tau$ is homogeneous -- and the Clifford torus $T=S^1(1/\sqrt{2})\times S^1(1/\sqrt{2})$ are minimal surfaces in $S_{\tau}$ for every $\tau$, and their area were explicitly computed by Torralbo \cite[Proposition 2]{Torralbo}. 

It follows from \cite{torralbo2009compact} that the equatorial spheres are, in fact, the only embedded minimal spheres in $S_\tau$. In addition, their area $A(\tau):=\area_{S_\tau}(S^2)$ is explicitly computed in \cite[Proposition 2]{Torralbo}, and satisfy $A(\tau) \geq 2\pi$.

In the previous work \cite{cgberger}, the authors study the volume spectrum of Riemannian bundles. In particular, for the Berger spheres, we used the computation of the volume spectrum of the $2$-sphere by \cite{CMSurfaces} to prove the following upper bound for the min-max widths:
\begin{equation} \label{berger width upper bound}
\omega_p(S_\tau) \leq 2\pi^2\tau\lfloor \sqrt{p} \rfloor, \quad \text{for every} \ p \in \N.
\end{equation}

\section{proof of the main theorem}

\begin{proof}[Proof of Proposition \ref{equator p}]

Note that in the round $3$-sphere $S^3=S_1$, the area of the equatorial $2$-spheres realizes the $p$-widths for $p = 1,2,3,4$. Therefore, we may set $\tau(p) = 1$ for $p = 1,2,3,4$ and restrict our consideration to the case $p \geq 5$.

For $p \geq 5$, by \eqref{berger width upper bound}, when $\tau \leq \frac{1}{\pi \lfloor \sqrt{p} \rfloor}$,
\begin{equation*}
\omega_p(S_\tau) \leq 2\pi^2\tau\lfloor \sqrt{p} \rfloor \leq 2\pi \leq A(\tau).
\end{equation*}

On the other hand, for the round $3$-sphere $S_1$, it follows from the solution of the Willmore conjecture by Marques-Neves \cite{MarquesNevesWillmore} that:
\begin{equation*}
\omega_p (S_1) \geq \omega_5 (S_1) = 2\pi^2 > 4\pi = A(1).
\end{equation*}
By the Intermediate value theorem and the continuity of the functions $A(\tau)$ and $\omega_p(S_{\tau})$ (observe that the metrics $\langle\cdot,\cdot\rangle_\tau$ in $S^3$ depend smoothly on $\tau$, and $\omega_p$ depends continuously on the metric, see e.g. Lemma $1$ of \cite{MarquesNevesequidistribution}), there exists $\tau(p) \in [\frac{1}{\pi \lfloor \sqrt{p} \rfloor},1)$ such that 
$$\omega_p (S_{\tau(p)})= A({\tau(p)}).$$\qedhere
\end{proof}

We are now ready to prove our main theorem. 

\begin{proof}[Proof of \ref{nonexistence}]

For any $p \geq 1$, by Proposition \ref{equator p}, the equatorial spheres attain the $p$-width of $S_{\tau(p)}$. For every minimal surface $\Sigma^q$ with area $< A(\tau(p))$, since the equatorial spheres are the only minimal spheres in $S_{\tau(p))}$, $\Sigma^q$ must have genus $\geq 1$. By Proposition \ref{genus non increasing}, there is no eternal Brakke flow connecting an equatorial sphere and $\Sigma^q$.
\end{proof}

\begin{rmk}
Since the equatorial sphere $S^2 \subset S_\tau$ has Morse index 1 and $S_\tau$ contains no stable minimal surfaces, we can combine the results of Choi-Mantoulidis \cite{ChoiMantoulidis} and White \cite{White-meanconvex} (together with Brakke regularity for mean curvature flow) to conclude that any ancient Brakke flow $\{\Sigma_t\}_{t \in (-\infty,T]}$ with $\Sigma_t \to S^2$ as $t \to -\infty$ in the varifold sense must actually disappear in finite time.
\end{rmk}

\section{two proofs of Proposition \ref{equator p not 1}}

We provide two proofs of Proposition \ref{equator p not 1}, which gives us some interesting byproducts concerning the volume spectrum of the Berger spheres.

\begin{proof}[First Proof of Proposition \ref{equator p not 1}]

Let $f: (0,1] \to \Z_{\geq 0}$ be the function defined by
\[
f(\tau) := 
\begin{cases} 
\max\bigl\{k \in \mathbb{N} \mid \omega_1(S_\tau) = \cdots = \omega_k(S_\tau) = A(\tau)\bigr\} & \text{if } \omega_1(S_\tau) = A(\tau), \\ 
0 & \text{otherwise.}
\end{cases}
\]

We know that $f(1) = 4$, and that $f(\tau) = 0$ for $\tau < \frac{1}{\pi}$ by Corollary $D$ in \cite{cgberger}.\medskip

\noindent\textbf{Claim:} $f$ is bounded. 

\noindent \emph{Proof of Claim.} Otherwise, there exist a sequence $\{\tau_j\} \subset (0,1]$ such that $f(\tau_j)>j$ for every $j \in \N$. By the observation above, we may assume $\{\tau_j\} \subset [\frac{1}{\pi},1]$ and, passing to a subsequence if necessary, that $\tau_i \to \tau$ for some $\tau \in [\frac{1}{\pi}, 1]$.

Since $m \geq j$ implies $f(\tau_m) >j$, by the definition of $f$,
\[\omega_1(S_{\tau_m}) =\cdots = \omega_j(S_{\tau_m}) = A(\tau_m), \text{ for all } m\geq j.\]
If we let $m \to + \infty$, then by the continuity of widths with respect to the Riemannian metric, by the continuity of $A(\tau)$ and by $\tau_m \to \tau$, we obtain 
\[\omega_1(S_{\tau}) =\cdots = \omega_j(S_{\tau}) = A(\tau).\]
Since $j$ is arbitrary, we conclude $\omega_j(S_{\tau}) = A(\tau)$ for all $j$. This contradicts Theorem \ref{weyl law}. \hfill\qed
\smallskip

Hence, there exists $N \geq 4$ such that $f(\tau) \leq N$ for all $\tau \in (0,1]$. The same argument as in the proof of Proposition \ref{equator p} implies the existence of $\tau' \in [\frac{1}{\pi \lfloor \sqrt{N+1} \rfloor},1)$ satisfying 
$$\omega_{N+1} (S_{\tau'})= A({\tau'}).$$ 

By the definitions of $N$ and $f$, we know that $\omega_1(S_{\tau'}) \neq A(\tau')$. Therefore 
$$A(\tau')=\mathrm{area}_{S_{\tau'}}(S^2) = \omega_{N+1}(S_{\tau'}) > \omega_1(S_{\tau'}).$$
\end{proof}

We now present a proof that achieves a similar conclusion without directly invoking the function 
$f$ or relying on contradiction arguments.

\begin{proof}[Second Proof of Proposition \ref{equator p not 1}]

Let $\tau' = \inf\limits_{\tau \in (0,1]} \{\omega_1(S_{\tau}) = A(\tau)\}$.

By Corollary D in \cite{cgberger}, we know that $\tau' \in [\frac{1}{\pi}, 1]$, in particular, $\tau' > 0$. By Theorem \ref{weyl law}, there exists a positive integer $p_1 \geq 2$ such that
\[\omega_{p} (S_{\tau'}) > A(\tau'), \text{ for any } p \geq p_1. \]

For any such $p$, the same argument as in the proof of Proposition \ref{equator p} implies the existence of $\tau_p \in [\frac{1}{\pi \lfloor \sqrt{p} \rfloor},\tau')$ such that
\[\omega_p (S_{\tau_p}) = A(\tau_p).\]

Since $\tau_p < \tau'$, we have  $\omega_1 (S_{\tau_p}) \neq A(\tau_p)$. Therefore
\[A(\tau_p) = \omega_p(S_{\tau_p}) > \omega_1 (S_{\tau_p}).\qedhere\]
\end{proof}

The proof of Corollary \ref{infinite berger} is based on iterating the above method by replacing the $1$-width with the $p_i$-width.

\begin{proof}[Proof of Corollary \ref{infinite berger}]

Let $p_0 = 1$. Let $p_1, \tau_{p_1}$ be as in the second proof, which implies
\[A(\tau_{p_1}) =  \omega_{p_1} (S_{\tau_{p_1}}) > \omega_1 (S_{\tau_{p_1}}).\]

Suppose $p_i$ and $\tau_{p_i}$ are already defined with $A(\tau_{p_i})= \omega_{p_i} (S_{\tau_{p_i}}) > \omega_{p_{i-1}} (S_{\tau_{p_i}})$. For any $\tau$ such that $\omega_{p_i}(S_{\tau}) = A(\tau)$, by \eqref{berger width upper bound},
\[2\pi \leq A(\tau) = \omega_{p_i}(S_{\tau}) \leq 2\pi^2 \tau \lfloor \sqrt{p_i} \rfloor.\]
Thus $\tau \geq \frac{1}{\pi \lfloor \sqrt{p_i} \rfloor}.$ Let $\tau' = \inf\limits_{\tau \in (0,1]} \{\omega_{p_i}(S_{\tau}) = A(\tau)\}$. Then $0 < \frac{1}{\pi \lfloor \sqrt{p_i} \rfloor} \leq \tau' \leq \tau_{p_i}$.

By Theorem \ref{weyl law}, there exists a positive integer $p_{i+1} > p_i$ such that
\[\omega_{p_{i+1}} (S_{\tau'}) > A(\tau'). \]

Again, the same argument as in the proof of Proposition \ref{equator p} implies the existence of $\tau_{p_{i+1}} \in [\frac{1}{\pi \lfloor \sqrt{p_{i+1}} \rfloor}, \tau')$ satisfying 
$$\omega_{p_{i+1}} (S_{\tau_{p_{i+1}}}) = A(\tau_{p_{i+1}}).$$

Since $\tau_{p_{i+1}} < \tau'$, we have  $\omega_{p_i} (S_{\tau_{p_{i+1}}}) \neq A(\tau_{p_{i+1}})$. Therefore
\[A(\tau_{p_{i+1}}) = \omega_{p_{i+1}}(S_{\tau_{p_{i+1}}}) > \omega_{p_i} (S_{\tau_{p_{i+1}}}).\]

This induction argument gives us an increasing sequence $\{p_i\}$ and a decreasing sequence $\{\tau_{p_i}\}$. Let $q_i$ be the smallest integer such that $\omega_{q_i} (S_{\tau_{p_i}}) = A(\tau_{p_i})$, and set $\tau_{q_i} = \tau_{p_i}$. Then
\[A(\tau_{q_i}) = \omega_{q_i}(S_{\tau_{q_i}}) > \omega_{q_i - 1}(S_{\tau_{q_i}}).\]

Since $\omega_{q_i}(S_{\tau_{q_i}}) = A(\tau_{q_i}) = A(\tau_{p_i}) > \omega_{p_{i-1}} (S_{\tau_{p_i}}) = \omega_{p_{i-1}} (S_{\tau_{q_i}})$, we have $p_{i-1} < q_i \leq p_i$, which implies that $\{q_i\}$ is an increasing sequence.
\end{proof}

\begin{rmk}
We observe that in the proofs above, we need not use the full strength of Theorem \ref{weyl law}. We rely simply on the unboundedness of $\omega_p(S_\tau)$, which follows, for instance, from \cite{guthminmax}.
\end{rmk}

\bibliographystyle{amsalpha}
\bibliography{main}

\end{document}